\documentclass[11pt]{amsart}

\usepackage{amsmath,amssymb,amsthm}
\usepackage{booktabs}
\usepackage{array}
\usepackage{hyperref}
\usepackage{xcolor}

\hypersetup{
  colorlinks=true,
  linkcolor=blue!70!black,
  citecolor=green!40!black,
  urlcolor=blue!70!black,
}

\newtheorem{theorem}{Theorem}[section]
\newtheorem{proposition}[theorem]{Proposition}
\newtheorem{lemma}[theorem]{Lemma}
\newtheorem{corollary}[theorem]{Corollary}

\theoremstyle{definition}
\newtheorem{definition}[theorem]{Definition}
\theoremstyle{remark}
\newtheorem{remark}[theorem]{Remark}

\newtheorem{lettertheorem}{Theorem}

\newtheorem{letterconjecture}[lettertheorem]{Conjecture}

\newcommand{\Rvm}{R_{\mathrm{vm}}}
\newcommand{\LC}{\mathrm{LC}}
\newcommand{\vm}{\leq_{\mathrm{vm}}}

\title[Vertex-minor Ramsey numbers]{Vertex-minor Ramsey numbers:\\ exact values and extremal structure}

\author{Ji Ho Bae}
\address{JRTI}
\curraddr{}
\email{jihobae@snu.ac.kr}
\thanks{Affiliation: JRTI. Email: jihobae@snu.ac.kr}

\subjclass[2020]{Primary 05D10; Secondary 05C75, 05C85}
\keywords{vertex-minor, local complementation, Ramsey number, extremal graph}

\date{\today}

\begin{document}

\begin{abstract}
We determine the vertex-minor Ramsey number $\Rvm(4)=11$, where
$\Rvm(k)$ is the smallest~$n$ such that every $n$-vertex graph contains
the edgeless graph~$E_k$ as a vertex-minor.  We prove this by an
exhaustive classification of the graphs on~$10$ and~$11$ vertices under
local complementation.  At the extremal order $n=10$, exactly six
non-isomorphic graphs avoid~$E_4$ as a vertex-minor; up to isomorphism,
they represent five LC-equivalence classes, and each labeled LC orbit
has cardinality~$8{,}712$.  Thus $k=4$ is the first case in which the
general upper bound $2^k-1$ is not attained.  Using the extremal graphs
as building blocks, we derive explicit lower bounds on~$\Rvm(k)$ that
surpass the leading term of the asymptotic bound for all $k\leq 9$; in
particular,
$\Rvm(5)\geq 13$.  We also describe structural properties of the six
extremal graphs and formulate the next open problem, whether
$\Rvm(5)=15$.
\end{abstract}

\maketitle

\section{Introduction}\label{sec:intro}

Local complementation at a vertex~$v$ of a graph~$G$ toggles every edge
within the neighborhood~$N_G(v)$, leaving all other edges unchanged.
Two graphs related by a finite sequence of such operations are said to
be \emph{locally equivalent} (or \emph{LC-equivalent}).  A graph~$H$ is
a \emph{vertex-minor} of~$G$ if $H$ can be obtained from~$G$ by a
sequence of local complementations and vertex deletions.  Introduced by
Bouchet~\cite{Bouchet1988}, the vertex-minor relation
is central in structural graph theory, governing rank-width~\cite{Oum2005}
and circle graphs~\cite{Bouchet1994}; see~\cite{KimOum2024} for a survey.

By analogy with classical Ramsey theory, we study the
\emph{vertex-minor Ramsey number} $\Rvm(k)$, defined as the smallest
integer~$n$ such that every graph on~$n$ vertices contains the edgeless
graph~$E_k$ as a vertex-minor.  The existence of~$\Rvm(k)$ for all~$k$
is guaranteed by the general bounds
\begin{equation}\label{eq:general-bounds}
  (1+o(1))\,\frac{k^2}{2\log_2 3}
  \;\leq\;
  \Rvm(k)
  \;\leq\;
  2^k - 1.
\end{equation}
Ascoli et al.~\cite{AFFMP2026} introduced~$\Rvm(k)$ and proved
these bounds (the lower bound is probabilistic; the upper bound derives
from rank-width).  Vertex-minor universality also arises in quantum
computing~\cite{CautresEtAl2024,ChaoXu2026}.  Unlike the classical Ramsey numbers
$R(k,k)$~\cite{GRS1990,Radziszowski2026}, which grow exponentially,
the bounds~\eqref{eq:general-bounds} permit polynomial growth
of~$\Rvm(k)$.

Our main result settles the first case where the upper bound
$2^k-1$ is not tight.

\begin{lettertheorem}\label{thm:A}
  $\Rvm(4) = 11$.
\end{lettertheorem}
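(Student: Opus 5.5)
The proof of Theorem~\ref{thm:A} has two halves: the lower bound $\Rvm(4)\geq 11$ and the upper bound $\Rvm(4)\leq 11$. Both come from an exhaustive computation organized around LC-equivalence classes. Two observations make this feasible. First, the property ``$G$ contains $E_k$ as a vertex-minor'' is invariant under local complementation. Second, it is monotone under induced subgraphs: if $G-v$ contains $E_k$, so does $G$. By a standard fact (Bouchet), $G$ contains $E_k$ as a vertex-minor if and only if some graph locally equivalent to $G$ has an independent set of size $k$. The reason is that any vertex-minor $H$ of $G$ on vertex set $S$ is LC-equivalent to an induced subgraph $G'[S]$ for some $G'$ LC-equivalent to $G$, since local complementations at deleted vertices can be replaced by pivots and local complementations inside $S$. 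So the test for a single graph is: enumerate its labeled LC orbit, and ask whether any member has $\alpha\geq k$.

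For the lower bound, I will exhibit an explicit graph $G_0$ on $10$ vertices, one of the six extremal graphs found by the search, and certify that every graph in its labeled LC orbit has independence number at most $3$. The orbit is finite; the abstract suggests it has $8{,}712$ elements. I will compute it by breadth-first search under the $10$ local complementation operators, storing adjacency matrices as bit strings. For each orbit member I will check that no $4$-subset is independent, which is $\binom{10}{4}=210$ subsets per graph. This is a small, fully reproducible certificate, and it gives $\Rvm(4)\geq 11$.

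For the upper bound, I must show every $11$-vertex graph has an LC-equivalent graph with an independent $4$-set. Enumerating all graphs on $11$ vertices directly (about $10^9$ isomorphism classes) is costly, so I will use monotonicity and an inductive orbit-based classification instead. Let $\mathcal{B}_n$ be the set of LC classes of $n$-vertex graphs avoiding $E_4$. Any $11$-vertex bad graph $G$ has $G-v$ bad for every $v$. Hence every bad graph on $n+1$ vertices arises, up to local equivalence, by adding a new vertex with an arbitrary neighborhood to a representative of a class in $\mathcal{B}_n$. The justification is that local complementation at vertices other than the new one commutes with deletion of the new vertex, up to changing its neighborhood. So I will grow $\mathcal{B}_n$ one vertex at a time from small $n$. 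At each step I extend every representative by all $2^n$ possible neighborhoods, reduce the results modulo LC-equivalence and isomorphism using a canonical form (for example, the minimum canonical label over the orbit, or Bouchet's/Fon-der-Flaass's LC-class invariants for a first filter), and keep only those with no orbit member having $\alpha\geq4$. The theorem then reduces to the computational claims that $\mathcal{B}_{10}$ is nonempty, consisting of the five classes behind the six extremal graphs, and that $\mathcal{B}_{11}=\emptyset$. Concretely, each of the $2^{10}$ one-vertex extensions of each of the five representatives must have some LC-equivalent graph with an independent $4$-set, and for each extension I will record an explicit witness: a sequence of local complementations and the independent set.

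The main obstacle is keeping the intermediate families $\mathcal{B}_n$ for $n\approx 7$--$9$ tractable and computing LC-orbit canonical forms reliably. Orbits can have thousands of labeled members, and the number of extensions multiplies quickly. To handle this, I will first prune using cheap necessary conditions. One such condition is that a graph with an independent $4$-set, or with a vertex whose local complement creates one, is immediately discarded. I will also deduplicate by graph isomorphism (via nauty-style canonical labeling) before any orbit computation. I will compute full orbits only for the survivors. Because correctness depends entirely on this search, I will cross-check it independently. One check is to confirm the $n=10$ count of six non-isomorphic extremal graphs by a second method, namely searching directly for graphs all of whose LC-equivalents have $\alpha\leq3$ among the extensions. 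The other is to verify the stored witnesses for $n=11$ with a separate short program.
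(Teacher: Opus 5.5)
Your argument is correct. The lower bound is the same as the paper's: one $10$-vertex graph certified by exhausting its labeled LC orbit (the paper does this for all six). The upper bound, however, takes a genuinely different route.

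The paper runs the three-phase test on every one of the $1{,}018{,}997{,}864$ non-isomorphic $11$-vertex graphs produced by \texttt{geng}. You instead combine heredity and LC-invariance of $E_4$-avoidance with the identity $(G*u)-v=(G-v)*u$ for $u\neq v$. If local complementations away from $v$ carry $G-v$ to a class representative $R$, the same sequence carries $G$ to a one-vertex extension of $R$. So only $2^{10}$ extensions per representative of $\mathcal{B}_{10}$ need testing.

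Each route buys something different.
\begin{itemize}
\item Your route gives a tiny final computation and a compact certificate: an LC sequence and an independent $4$-set for each extension. This can be rechecked independently, which is far more auditable than a report that none of roughly $10^9$ graphs reached Phase~3.
\item The paper's route gives self-containment: its $n=11$ check depends on nothing at lower orders. Yours is only as sound as the completeness of $\mathcal{B}_{10}$. That in turn rests on every stage of your chain $\mathcal{B}_4\to\cdots\to\mathcal{B}_{10}$, or else on the paper's exhaustive $n=10$ run.
\end{itemize}

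You also do not need LC canonical forms, which is the step you flag as the main obstacle. Heredity alone lets you grow isomorphism classes of bad graphs; there are $953$, $588$ and $6$ at $n=8,9,10$. LC reduction only shrinks these lists further.

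One caution: do not hard-wire the number ``five'' into your checks. Any complete system of representatives suffices, and redundancy is harmless, but that count is in fact wrong. For every vertex $v$ of $G_6=C_5\nabla C_5$, a hand computation gives $G_6*v\cong G_2$. Further local complementations reach copies of $G_1,G_3,G_4,G_5$. So the six extremal graphs form a single LC class up to isomorphism, and your last step reduces to the $2^{10}$ one-vertex extensions of $C_5\nabla C_5$.
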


The value $\Rvm(4)=11$ lies strictly below $2^4-1=15$, so the upper bound $2^k-1$ already fails at $k=4$.  At the
extremal order $n=10$, we obtain a complete classification.

\begin{lettertheorem}\label{thm:B}
  Among all $12{,}005{,}168$ non-isomorphic graphs on~$10$ vertices,
  exactly six do not contain~$E_4$ as a vertex-minor.  These six graphs
  represent, up to isomorphism, five LC-equivalence classes; each
  labeled LC orbit has cardinality~$8{,}712$.
\end{lettertheorem}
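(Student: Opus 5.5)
This is a computational classification, so the plan is an exhaustive computer search made tractable by two reductions, followed by independent certification of the few survivors. The key reformulation is as follows. $G$ contains $E_4$ as a vertex-minor if and only if some graph $G'$ locally equivalent to $G$ has an independent set of size $4$. This holds because vertex deletion commutes with local complementation at the remaining vertices, so any sequence of local complementations and deletions can be rearranged into local complementations first, then deletions. Hence $G$ avoids $E_4$ exactly when every member of its LC orbit has independence number at most $3$.

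\textbf{Step 1: enumerate and filter.} Generate the $12{,}005{,}168$ isomorphism classes on $10$ vertices, for instance with \texttt{geng}. First discard every graph with $\alpha(G)\ge 4$; this removes the overwhelming majority. For each survivor, explore its orbit by breadth-first search over local complementations on labeled graphs. Abort as soon as some graph reached has $\alpha\ge 4$. Computing $\alpha$ on $10$ vertices is instantaneous via bitmask clique search on the complement.

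A useful cheaper pruning is the following. For each vertex $v$ and each choice among $G$, $G*v$, and $G*v*w*v$ (pivots), deleting $v$ yields an $E_3$-free graph on $9$ vertices only rarely. So one can precompute the set $\mathcal{F}_9$ of $9$-vertex graphs avoiding $E_3$. We know $\Rvm(3)=7$, so this set should be empty unless handled by closure. More practically, compute the set $\mathcal{F}_{n}$ of $E_{k}$-avoiding graphs inductively: a graph avoids $E_4$ only if every vertex-deleted subgraph of every LC-equivalent graph avoids $E_4$. Therefore $10$-vertex candidates can be restricted to one-vertex extensions of $9$-vertex $E_4$-avoiders, which are closed under LC. I would build the LC-closed families $\mathcal{F}_n$ for $n\le 10$ bottom-up this way, which avoids scanning all $12$ million graphs for full orbits.

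\textbf{Step 2: the final count.} Step 1 yields exactly six isomorphism classes. To group them into LC classes, compute canonical forms (nauty) of every graph in each labeled orbit, and union classes that share a canonical form. Alternatively, use Bouchet's polynomial-time LC-equivalence test to certify which pairs are equivalent; this is the step that produces five classes from six graphs. The orbit size $8{,}712$ comes out of the BFS directly: count distinct labeled adjacency matrices reached. As a consistency check, confirm that this number equals the same value for all five classes.

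\textbf{Main obstacle.} The difficulty is not a conceptual step but making the computation both feasible and trustworthy. Labeled LC orbits can be large ($\sim 10^4$ graphs here, but larger for generic graphs), and a naive orbit exploration of millions of graphs is costly. The early-abort on $\alpha\ge 4$ and the inductive restriction to extensions of $\mathcal{F}_9$ are what I expect to make it work. For reliability I would cross-validate with two independent implementations: the bottom-up extension method and a direct scan of the $\alpha\le 3$ graphs. I would also verify the six survivors separately by exhaustive orbit enumeration, checking that every orbit element has $\alpha\le 3$.
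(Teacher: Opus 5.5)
Your plan is correct, and its core matches the paper's proof. Both enumerate with \texttt{geng}, discard graphs with $\alpha\ge 4$, and explore each labeled LC orbit by breadth-first search, stopping early once some member has $\alpha\ge 4$. Both then certify the survivors by exhausting their full orbits, and use canonical labelling of orbit members to show that $G_1$ and $G_2$ share an orbit while no other pair of orbits shares an isomorphism type.

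What you add is the bottom-up pruning, which the paper does not use. Since $E_4\vm G-v$ implies $E_4\vm G$, every $10$-vertex avoider is a one-vertex extension of one of the $588$ nine-vertex avoiders. So only $588\cdot 2^9\approx 3\times 10^5$ candidates need testing rather than $12{,}005{,}168$. Closure under deletion is all this step needs; LC-closure of the families is not used. The paper cross-validates by running the same direct scan in two implementations (Python and C++). Your pairing of a bottom-up computation with a direct scan is the more independent check, because the two algorithms differ.

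Two small corrections:
\begin{enumerate}
\item Drop the remark about a family $\mathcal{F}_9$ of $E_3$-avoiders. That family is empty because $\Rvm(3)=7$, it plays no role, and the surrounding sentence confuses $E_3$ with $E_4$.
\item Bouchet's LC-equivalence algorithm decides equivalence of two graphs on a common labeled vertex set. On its own it cannot establish \emph{non}-equivalence up to isomorphism unless you range over all bijections. Comparing canonical forms across the full orbits, as the paper does, is the right tool for establishing the count of five classes.
\end{enumerate}
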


The known values of~$\Rvm$ for $k=2,3,4$ follow the pattern
$4k-5$ (see Table~\ref{tab:values}).

\begin{table}[ht]
  \centering
  \caption{Known values of $\Rvm(k)$ and comparison with the pattern
    $4k-5$, the upper bound $2^k-1$, and the leading term of the
    asymptotic lower bound.}
  \label{tab:values}
  \begin{tabular}{ccccc}
    \toprule
    $k$ & $\Rvm(k)$ & $4k-5$ & $2^k-1$ &
      $\lfloor k^2/(2\log_2 3)\rfloor$ \\
    \midrule
    $2$ & $3$  & $3$  & $3$  & $1$ \\
    $3$ & $7$  & $7$  & $7$  & $2$ \\
    $4$ & $11$ & $11$ & $15$ & $5$ \\
    \bottomrule
  \end{tabular}
\end{table}

\begin{letterconjecture}\label{conj:C}
  $\Rvm(5)=15$.
\end{letterconjecture}

This is the next unresolved case.  The pattern $\Rvm(k)=4k-5$ for
$k\in\{2,3,4\}$ motivates Conjecture~\ref{conj:C}, whereas the
asymptotic lower bound in~\eqref{eq:general-bounds} rules out any global
linear formula.  We return to this point in
Section~\ref{sec:conjectures}.

\medskip
\noindent\textbf{Organization.}
Sections~\ref{sec:prelim}--\ref{sec:method} establish definitions and
the computational method;
Sections~\ref{sec:results}--\ref{sec:extremal} prove the main results
and analyze extremal graphs;
Sections~\ref{sec:conjectures}--\ref{sec:conclusion} discuss conjectures
and open problems.

\section{Preliminaries}\label{sec:prelim}

Graphs are finite and simple.  We write
$V(G)$ and $E(G)$ for the vertex set and edge set of~$G$, and set
$|G|=|V(G)|$.  The \emph{neighborhood} of a vertex~$v$ in~$G$ is
$N_G(v)=\{u\in V(G):uv\in E(G)\}$.  The \emph{independence number}
$\alpha(G)$ is the maximum cardinality of an independent set (a set of
pairwise non-adjacent vertices).  The \emph{clique number} $\omega(G)$
is the maximum cardinality of a clique.  We write $E_k$ for the
edgeless graph on~$k$ vertices, $K_n$ for the complete graph on~$n$
vertices, and $C_n$ for the cycle of length~$n$.

\begin{definition}[Local complementation]\label{def:lc}
  Let $G$ be a graph and $v\in V(G)$.  The \emph{local complementation}
  of~$G$ at~$v$, denoted $G*v$, is the graph on~$V(G)$ obtained by
  replacing the induced subgraph $G[N_G(v)]$ with its complement: for
  each pair $\{x,y\}\subseteq N_G(v)$, the edge $xy$ is present
  in~$G*v$ if and only if it is absent in~$G$.  All edges not contained
  in~$N_G(v)$ are unchanged.
\end{definition}

\begin{definition}[LC equivalence and LC orbit]\label{def:lc-orbit}
  Two graphs $G$ and~$H$ on the same vertex set are
  \emph{LC-equivalent}, written $G\sim H$, if $H$ can be obtained
  from~$G$ by a finite (possibly empty) sequence of local
  complementations.  The equivalence class $[G]_{\LC}=\{H : H\sim G\}$
  is the \emph{LC orbit} of~$G$.
\end{definition}

\begin{definition}[Vertex-minor]\label{def:vm}
  A graph~$H$ is a \emph{vertex-minor} of~$G$, written $H\vm G$, if
  $H$ is isomorphic to an induced subgraph of some graph
  in~$[G]_{\LC}$.  Equivalently, $H$ can be obtained from~$G$ by a
  sequence of local complementations and vertex deletions
  \cite{Bouchet1988,FonDerFlaass1988}.
\end{definition}

The equivalence of the two formulations in Definition~\ref{def:vm}
follows from work of Bouchet~\cite{Bouchet1988} and
Fon-Der-Flaass~\cite{FonDerFlaass1988}: if $H$ is a vertex-minor
of~$G$ and $v\in V(G)\setminus V(H)$, then $H$ is a vertex-minor of at
least one of $G-v$, $G*v-v$, or $G\times vw-v$ for some $w\in N_G(v)$,
where $G\times vw=G*v*w*v$ denotes the \emph{pivot} at the edge~$vw$.
The vertex-minor relation is a quasi-order: it is reflexive and
transitive.  Deciding whether $H\vm G$ for two given graphs $G,H$ is
NP-complete~\cite{DahlbergHelsenWehner2022}, but for fixed~$H$ the
problem admits a polynomial-time algorithm via Courcelle's
theorem~\cite{CourcelleOum2007}.

\begin{definition}[Vertex-minor Ramsey number]\label{def:Rvm}
  For a positive integer~$k$, the \emph{vertex-minor Ramsey number}
  $\Rvm(k)$ is the smallest integer~$n$ such that $E_k\vm G$ for every
  graph~$G$ with $|G|=n$.
\end{definition}

\noindent
The next proposition reduces the problem of detecting $E_k$ as a
vertex-minor to a search over LC orbits.

\begin{proposition}[LC-orbit characterization]\label{prop:alpha}
  Let $G$ be a graph and $k$ a positive integer.  Then
  \[
    E_k \vm G
    \quad\Longleftrightarrow\quad
    \alpha(G') \geq k
    \text{\; for some $G' \in [G]_{\LC}$.}
  \]
\end{proposition}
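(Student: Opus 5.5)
The plan is to unwind Definition~\ref{def:vm} directly; almost no work beyond bookkeeping is needed, since the paper has defined the vertex-minor relation through LC orbits and induced subgraphs.

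For ($\Leftarrow$), suppose $G'\in[G]_{\LC}$ satisfies $\alpha(G')\geq k$. Choose an independent set $S\subseteq V(G')$ with $|S|=k$. The induced subgraph $G'[S]$ is edgeless on $k$ vertices, so it is isomorphic to $E_k$. Thus $E_k$ is isomorphic to an induced subgraph of a member of $[G]_{\LC}$, and Definition~\ref{def:vm} gives $E_k\vm G$.

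For ($\Rightarrow$), suppose $E_k\vm G$. By Definition~\ref{def:vm} there are $G'\in[G]_{\LC}$ and $S\subseteq V(G')$ with $G'[S]\cong E_k$. Then $|S|=k$ and $S$ has no internal edges, so $S$ is independent in $G'$ and $\alpha(G')\geq k$.

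The only point that needs care arises if one uses the operational formulation instead: \emph{$E_k$ is obtained from $G$ by local complementations interleaved with deletions}. In that case one must first commute the deletions past the local complementations. The key identity is $(G-u)*v=(G*v)-u$ for $u\neq v$, which holds because local complementation at $v$ only affects pairs inside $N(v)$ and deleting $u$ does not change adjacencies among the other vertices. With this identity, all deletions can be pushed to the end, giving an induced subgraph of some LC-equivalent graph. The paper already asserts this equivalence, citing Bouchet and Fon-Der-Flaass, so I will invoke it. The proposition then reduces to the observation that an induced copy of $E_k$ is exactly an independent $k$-set.
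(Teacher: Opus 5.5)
Your proof is correct and matches the paper's argument. Both directions unwind Definition~\ref{def:vm} by identifying an induced copy of $E_k$ in an LC-equivalent graph with an independent $k$-set. Your side remark is also sound: the identity $(G-u)*v=(G*v)-u$ for $u\neq v$ by itself lets you push all deletions to the end, so the equivalence with the operational formulation does not need the cited Bouchet and Fon-Der-Flaass results.
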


\begin{proof}
  Suppose $\alpha(G')\geq k$ for some $G'\in[G]_{\LC}$.  Choose an
  independent set $S\subseteq V(G')$ of size~$k$.  Then $G'[S]\cong
  E_k$.  Since $G'\sim G$ and $G'[S]$ is an induced subgraph of~$G'$,
  we have $E_k\vm G$.

  Conversely, suppose $E_k\vm G$.  By definition, there exists
  $G'\in[G]_{\LC}$ and $S\subseteq V(G')$ with $|S|=k$ and $G'[S]\cong
  E_k$.  Since $E_k$ has no edges, $S$ is independent in~$G'$, so
  $\alpha(G')\geq k$.
\end{proof}

\begin{remark}\label{rmk:reformulation}
  Proposition~\ref{prop:alpha} yields the reformulation
  \[
    \Rvm(k)
    \;=\;
    \min\!\Bigl\{
      n\in\mathbb{N} :
      \max_{G'\in[G]_{\LC}}\alpha(G') \geq k
      \text{ for every graph $G$ with $|G|=n$}
    \Bigr\}.
  \]
  This is the basis of our algorithm: for each graph~$G$ on $n$
  vertices, enumerate $[G]_{\LC}$ and test whether any member has an
  independent set of size~$k$.
\end{remark}

\begin{lemma}[LC orbit of a disconnected graph]\label{lem:disconnected}
  Let $G=G_1\cup\cdots\cup G_c$ be a graph whose connected components
  are $G_1,\dots,G_c$ on pairwise disjoint vertex sets.  Then
  \[
    [G]_{\LC}
    \;=\;
    \bigl\{\,G_1'\cup\cdots\cup G_c' :
             G_i'\in[G_i]_{\LC}\text{ for each }i\,\bigr\},
  \]
  and consequently
  $\;\displaystyle
    \max_{G'\in[G]_{\LC}}\alpha(G')
    \;=\;
    \sum_{i=1}^{c}\,\max_{G_i'\in[G_i]_{\LC}}\alpha(G_i').
  $
\end{lemma}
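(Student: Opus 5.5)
We prove the orbit description first and then read off the formula for the maximum independence number. The basic observation is that local complementation at a vertex $v$ only changes edges inside $N_G(v)$. Since $v$ lies in a single component $G_i$, we have $N_G(v)\subseteq V(G_i)$. Hence $G*v$ agrees with $G$ on every other component, and on $V(G_i)$ it equals $G_i*v$. In particular, no edge is ever created between vertices of different components, so the partition $V(G_1),\dots,V(G_c)$ remains a union of components for every graph in the orbit, although a component may break into smaller pieces.

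For the inclusion ``$\subseteq$'', we induct on the length of the sequence of local complementations. Suppose after some steps the current graph is $G_1'\cup\cdots\cup G_c'$ with $G_i'\in[G_i]_{\LC}$ on vertex set $V(G_i)$, and we next apply $*v$ with $v\in V(G_i)$. Then $N(v)\subseteq V(G_i)$, so the new graph is $G_1'\cup\cdots\cup(G_i'*v)\cup\cdots\cup G_c'$, and $G_i'*v\in[G_i]_{\LC}$. For the inclusion ``$\supseteq$'', take sequences $\sigma_i$ realizing $G_i\to G_i'$ inside each $G_i$. Apply $\sigma_1,\dots,\sigma_c$ one after another to $G$. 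By the same locality argument, each operation in $\sigma_i$ acts on the $V(G_i)$ part exactly as it does on $G_i$ alone and leaves the other parts unchanged. The only point needing care is that ``union'' here means disjoint union on fixed vertex sets, where each $G_i'$ may itself be disconnected; the locality argument needs only that no edges run between different blocks $V(G_i)$, which is preserved.

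For the formula, first note that for a graph whose vertex set splits into blocks with no edges between them, $\alpha$ is additive. An independent set restricts to an independent set in each block, and a union of independent sets from the blocks is independent because there are no cross edges. By the orbit description, maximizing $\alpha(G')$ over $G'\in[G]_{\LC}$ is the same as maximizing $\sum_i\alpha(G_i')$ over independent choices $G_i'\in[G_i]_{\LC}$. Since the choices are independent, this maximum is the sum of the individual maxima.

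No step is a genuine obstacle. The only thing to state cleanly is the locality fact $N_G(v)\subseteq V(G_i)$ and that it continues to hold along the whole sequence, which the induction handles.
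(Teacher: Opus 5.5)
Your proof is correct and uses the same locality argument as the paper: $N(v)\subseteq V(G_i)$, so each local complementation acts inside a single block and no cross edges ever appear; you simply spell out the induction for both inclusions and the additivity of $\alpha$ more explicitly. One side remark is wrong but harmless: components cannot break apart, because $v$ stays adjacent to all of $N(v)$ in $G*v$, so any deleted edge $xy$ with $x,y\in N(v)$ is bypassed by the path $x\,v\,y$ and each $G_i'$ is in fact connected, though your argument never needs this.
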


\begin{proof}
  Local complementation at $v\in V(G_i)$ modifies only edges within
  $N_{G_i}(v)\subseteq V(G_i)$, since no vertex outside~$V(G_i)$ is
  adjacent to~$v$.  Each LC step acts within a single component, giving
  the stated product structure.  The independence-number identity follows
  because in any $G'\in[G]_{\LC}$ the components remain on disjoint
  vertex sets with no inter-component edges.
\end{proof}

\section{Computational method}\label{sec:method}

\subsection{Three-phase classification}\label{subsec:phases}

To determine whether $\Rvm(k)\leq n$, we must verify that $E_k\vm G$
for every graph~$G$ on~$n$ vertices.  By
Proposition~\ref{prop:alpha}, this reduces to checking that the LC orbit
of each~$G$ contains a graph with independence number at least~$k$.  We
classify each non-isomorphic graph~$G$ on~$n$ vertices into one of three
phases.
\begin{itemize}
  \item \textbf{Phase~1.}  $\alpha(G)\geq k$.  Then $G$ itself
    witnesses $E_k\vm G$; no orbit enumeration is needed.
  \item \textbf{Phase~2.}  $\alpha(G)<k$, but some $G'\in[G]_{\LC}$
    discovered during the orbit search satisfies $\alpha(G')\geq k$.
  \item \textbf{Phase~3.}  $\alpha(G')<k$ for \emph{every}
    $G'\in[G]_{\LC}$ (the full orbit is explored with no truncation).
    By Proposition~\ref{prop:alpha}, $E_k\not\vm G$.
\end{itemize}
A graph is a counterexample (a witness that $\Rvm(k)>n$) if and only if
it falls into Phase~3.

\subsection{Orbit enumeration}\label{subsec:bfs}

Non-isomorphic graphs on~$n$ vertices are generated by
\texttt{geng}~\cite{McKayPiperno2014}.  For each graph~$G$ that does
not pass Phase~1, we enumerate the LC orbit $[G]_{\LC}$ by
breadth-first search.  Each graph is represented as a tuple of $n$
bitmasks encoding the adjacency rows, enabling $O(1)$ hashing and
comparison.  At each BFS step, we compute $G'*v$ for every vertex~$v$
and check whether the result has been visited.  When a graph with
$\alpha\geq k$ is encountered, the search terminates early (Phase~2).
If the BFS exhausts the entire orbit without finding such a graph, the
orbit constitutes a negative certificate (Phase~3).  Danielsen and
Parker~\cite{DanielsenParker2006} enumerated all LC equivalence classes
for $n\leq 12$.  Unlike~\cite{DanielsenParker2006}, we need no canonical
representative---only a Boolean predicate (existence of a large
independent set) per class.

Independent sets of size~$k$ are detected by backtracking with
bitmask pruning: vertices are selected greedily, and at each level the
set of available candidates is restricted to the non-neighbors of all
previously chosen vertices via bitwise AND\@.  For $k=4$ and $n\leq 11$
this is efficient, as most branches are pruned early.

\subsection{Cross-validation}\label{subsec:cross}

The entire pipeline was implemented independently in two languages:
Python (using \texttt{networkx} for graph operations) and C++ (compiled
with \texttt{g++ -O3}).  On each of $n\in\{9,10,11\}$, both
implementations produced identical Phase~1, Phase~2, and Phase~3
counts; for $n=10$, they also produced the same six
\texttt{graph6} codes for the counterexamples.

The complete classification at $n=11$ processed all
$1{,}018{,}997{,}864$ non-isomorphic graphs.  The C++ implementation
required approximately $7.4$~minutes on a single thread (at a rate of
${\sim}2.3\times 10^6$ graphs per second), with no Phase~3 graphs
found.

\section{Main results}\label{sec:results}

\subsection{The value $\Rvm(2)=3$}

\begin{proposition}\label{prop:Rvm2}
  $\Rvm(2) = 3$.
\end{proposition}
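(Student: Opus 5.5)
We prove the two inequalities $\Rvm(2)\leq 3$ and $\Rvm(2)>2$ separately, using Proposition~\ref{prop:alpha} throughout, so that it suffices to understand $\max_{G'\in[G]_{\LC}}\alpha(G')$ for small graphs.

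\textbf{Lower bound.} We exhibit a $2$-vertex graph avoiding $E_2$: take $G=K_2$ on vertices $u,v$. Local complementation at either vertex toggles edges only inside a neighborhood of size~$1$, which contains no pair. Hence $[K_2]_{\LC}=\{K_2\}$ and every member has $\alpha=1<2$. By Proposition~\ref{prop:alpha}, $E_2\not\vm K_2$, so $\Rvm(2)\geq 3$.

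\textbf{Upper bound.} Let $G$ be any graph on three vertices. If $\alpha(G)\geq 2$ we are done, so assume $\alpha(G)=1$, which forces $G=K_3$. Applying local complementation at any vertex $v$ complements the single edge between the two neighbors of~$v$, producing the path $P_3$. This graph has $\alpha(P_3)=2$, witnessed by the two endpoints. By Proposition~\ref{prop:alpha}, $E_2\vm G$. Hence every $3$-vertex graph contains $E_2$, and $\Rvm(2)\leq 3$.

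There is no genuine obstacle here. The only point requiring care is the closure claim $[K_2]_{\LC}=\{K_2\}$, which follows because every neighborhood in $K_2$ is a singleton. Alternatively, one may note that the lower bound is trivial for any graph with $\alpha<2$ whose LC orbit is fixed.
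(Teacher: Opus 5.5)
Your proposal is correct and follows essentially the same route as the paper: you use $K_2$, whose LC orbit is $\{K_2\}$, for the lower bound, and for the upper bound you observe that only $K_3$ has $\alpha<2$ and that $K_3*v\cong P_3$ has $\alpha=2$. The paper lists all four $3$-vertex graphs explicitly rather than arguing that $\alpha=1$ forces $K_3$, but this is the same argument.
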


\begin{proof}
  \textbf{Lower bound.}
  The graph $K_2$ on $\{0,1\}$ has $\alpha(K_2)=1$.  Local
  complementation at vertex~$0$ complements all edges within
  $N(0)=\{1\}$; since a single vertex spans no edges, $K_2*0=K_2$.  By
  symmetry $[K_2]_{\LC}=\{K_2\}$, and every orbit member has
  $\alpha=1<2$.  By Proposition~\ref{prop:alpha}, $E_2\not\vm K_2$,
  so $\Rvm(2)>2$.

  \smallskip
  \textbf{Upper bound.}
  There are four non-isomorphic graphs on~$3$ vertices: $E_3$,
  $K_1\cup K_2$, $P_3$, and~$K_3$.  The first three satisfy
  $\alpha(G)\geq 2$, so $E_2\vm G$ directly.  For $K_3$, we have
  $\alpha(K_3)=1$, but $K_3*0$ has edge set $\{01,02\}$ (a
  path~$P_3$), and $\alpha(P_3)=2$.  Hence $E_2\vm K_3$, and
  $\Rvm(2)\leq 3$.
\end{proof}

\subsection{The value $\Rvm(3)=7$}

\begin{proposition}\label{prop:Rvm3}
  $\Rvm(3) = 7$.
\end{proposition}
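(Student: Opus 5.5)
The proof has two halves. For the lower bound we exhibit a $6$-vertex graph with no $E_3$ vertex-minor. For the upper bound we show every $7$-vertex graph has one.

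\textbf{Lower bound.} A natural candidate is $C_6$ or $K_6$. We would pick whichever has an LC orbit small enough to check by hand or by the computation of Section~\ref{sec:method}. Two facts help. First, every member $G'$ of an orbit of a connected graph is connected, since local complementation preserves the cut-rank of every vertex set, and in particular preserves connectivity. Second, $\alpha(G')\ge 3$ requires three pairwise non-adjacent vertices. Rather than enumerate the whole orbit, we would look for an invariant obstruction. The cleanest route is to reach a known member, for instance a graph LC-equivalent to the $6$-vertex graph state attaining the optimal distance (the ``hexacode'' graph). Then we would check the following: for every $3$-set $S$, the cut-rank $\rho_{G'}(S)$, which is LC-invariant, equals $3$. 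If $S$ were independent in some $G'$, the rows indexed by $S$ of the $S\times(V\setminus S)$ adjacency matrix would carry all of $S$'s edges. Independence alone does not force lower cut-rank, so instead we use the criterion that $E_3\vm G$ on $6$ vertices is equivalent to a cut-rank deficiency condition on some $3$-set. If this invariant argument fails to close cleanly, we fall back to the Phase~3 certificate. That means exhibiting the full labeled orbit, which is finite, and recording that every member has $\alpha\le2$. By Proposition~\ref{prop:alpha}, this gives $E_3\not\vm G$, so $\Rvm(3)>6$.

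\textbf{Upper bound.} Take any $G$ on $7$ vertices. By Lemma~\ref{lem:disconnected}, if $G$ is disconnected we sum the orbit maxima over the components. Each component $H$ satisfies $\max\alpha\ge \lceil |H|/2\rceil$ for $|H|\le 2$ and $\max\alpha\ge 2$ for $3\le|H|\le 6$ (the case $|H|\ge3$ uses $\Rvm(2)=3$, Proposition~\ref{prop:Rvm2}). A short case check on partitions of $7$ then gives a total of at least $3$. For connected $G$, we first reduce the degree structure by LC. If some vertex $v$ has $|N(v)|\ge 2$ and $\alpha(G)\le2$, then the complement is triangle-free, so $G$ is dense. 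Local complementation at a vertex of large degree sparsifies its neighborhood, and we look for three pairwise non-adjacent vertices among $N(v)$ after complementing. Concretely, $\alpha(G)\le 2$ on $7$ vertices and the Ramsey bound $R(3,3)=6$ force a triangle inside $N(v)$ whenever $\deg v\ge 6$. That triangle becomes independent in $G*v$, which finishes that case. The remaining case is $\alpha\le2$ with maximum degree at most $5$. Then every vertex has at least $1$ non-neighbor, and the non-neighborhoods are cliques of the complement structure. We would combine one pivot $G\times vw$ with the same Ramsey argument applied to $N(v)\setminus N(w)$ and its analogues.

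\textbf{Main obstacle.} The hardest step is the last case of the upper bound, where $\alpha\le 2$ and the degrees are moderate. If a clean hand argument does not emerge, we would defer to the exhaustive computation of Section~\ref{sec:method} on all $1{,}044$ graphs on $7$ vertices, reporting that no graph lands in Phase~3.
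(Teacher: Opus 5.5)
\textbf{Gap in the lower bound.} The lower bound does not go through as written. Both graphs you name fail immediately. First, $\alpha(C_6)=3$. Second, $K_6*v=K_{1,5}$, which has $\alpha=5$. So neither can witness $\Rvm(3)>6$.

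The hexacode class you then invoke is the right one. It contains the triangular prism $\overline{C_6}$, which is the paper's witness, and the prism has $\rho(X)=3$ for all twenty $3$-sets $X$. But you never fix a concrete graph. More importantly, the step that would make the invariant route work is only asserted. You posit a ``criterion'' tying $E_3\vm G$ to a cut-rank deficiency right after observing that independence of $S$ does not lower $\rho(S)$, and that observation is exactly where the missing idea lies. Your Phase~3 fallback has the same problem: it needs a specified graph. The paper takes $\overline{C_6}$ and enumerates its whole LC orbit by BFS.

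\textbf{How to close it along your route.} Only one direction of your criterion is needed. Suppose $S=\{x,y,z\}$ is independent in some $G'$, and let $B=A_{G'}[S,V\setminus S]$.
If $B$ is singular, then $\rho(S)\le 2$ directly.
If $B$ is invertible over $\mathbb{F}_2$, then $B^{-1}=\mathrm{adj}(B)$ is invertible, hence not the all-ones matrix. So some $2\times 2$ minor of $B$ vanishes, say on rows $\{x,y\}$ and columns $(V\setminus S)\setminus\{u\}$.
Now take the cut matrix of $X=\{x,y,u\}$. Rows $x,y$ are zero in column $z$, and they span at most one dimension on the remaining columns. Hence $\rho_{G'}(X)\le 2$.

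Since cut-rank is LC-invariant, a $6$-vertex graph whose $3$-cuts all have rank $3$ has $\alpha(G')\le 2$ throughout its orbit. A short symmetry check on the prism's $3$-sets then gives a computation-free lower bound. That is more than the paper's BFS certificate provides.

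\textbf{Upper bound.} This half is fine.
Your disconnected case is the paper's Case~B.
Your universal-vertex argument via $R(3,3)=6$ is correct.
The remaining connected case is deferred to exhaustive computation, exactly as the paper does. The ``pivot plus Ramsey'' sketch is not an argument in itself, but the deferral covers it.
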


\begin{proof}
  \textbf{Lower bound ($\Rvm(3)\geq 7$).}
  The triangular prism $G\cong\overline{C_6}$ on $6$~vertices is
  $3$-regular with $9$~edges and $\alpha(G)=2$.  By exhaustive BFS
  enumeration of $[G]_{\LC}$, every orbit member~$G'$ satisfies
  $\alpha(G')\leq 2$.  By Proposition~\ref{prop:alpha},
  $E_3\not\vm G$, so $\Rvm(3)>6$.

  This lower bound was independently noted by Ascoli et
  al.~\cite{AFFMP2026}.

  \smallskip
  \textbf{Upper bound ($\Rvm(3)\leq 7$).}
  Let $G$ be an arbitrary graph on $7$~vertices.

  \emph{Case~A: $G$ is connected.}
  By exhaustive computation, all $853$ non-isomorphic connected graphs
  on~$7$ vertices were tested: for each, some $G'\in[G]_{\LC}$ satisfies
  $\alpha(G')\geq 3$, giving $E_3\vm G$.

  \emph{Case~B: $G$ is disconnected.}
  Write $G=G_1\cup\cdots\cup G_c$ with $|V(G_i)|=n_i$,
  $n_1+\cdots+n_c=7$, $c\geq 2$.  By Lemma~\ref{lem:disconnected},
  $\max_{G'\in[G]_{\LC}}\alpha(G') = \sum_i
  \max_{G_i'\in[G_i]_{\LC}}\alpha(G_i')$.

  If $c\geq 3$, each component contributes at least~$1$ to the sum, so
  $\max\alpha(G')\geq 3$.  If $c=2$, assume without loss of generality
  that $n_1\geq n_2$; then $n_1\geq 4$.  Since
  $n_1\geq 4\geq 3=\Rvm(2)$, every
  graph on~$n_1$ vertices contains $E_2$ as a vertex-minor, so the
  larger component contributes at least~$2$.  The smaller component
  contributes at least~$1$.  Thus $\max\alpha(G')\geq 3$.

  In both cases $E_3\vm G$, confirming $\Rvm(3)\leq 7$.

  \smallskip
  \emph{Independent verification.}
  As a cross-check, all $1{,}044$ non-isomorphic graphs on~$7$ vertices
  (connected and disconnected) were tested directly for $E_3$
  containment; every graph passed.
\end{proof}

\subsection{The value $\Rvm(4)=11$}

\begin{theorem}[{$=$ Theorem~\ref{thm:A}}]\label{thm:main}
  $\Rvm(4) = 11$.
\end{theorem}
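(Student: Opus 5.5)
The proof splits into a lower bound $\Rvm(4)\geq 11$ and an upper bound $\Rvm(4)\leq 11$. Both follow the template of Proposition~\ref{prop:Rvm3}: an explicit extremal graph certified by full orbit enumeration, plus an exhaustive check on the next order. Proposition~\ref{prop:alpha} reduces everything to computing $\max_{G'\in[G]_{\LC}}\alpha(G')$.

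\textbf{Lower bound.} I would exhibit one of the ten-vertex graphs $G$ found in Phase~3 of the $n=10$ run, given explicitly (for instance by its \texttt{graph6} code or an edge list) with $\alpha(G)\leq 3$. I would then run the breadth-first orbit enumeration of Subsection~\ref{subsec:bfs} to completion, with no early termination. This shows that every member of $[G]_{\LC}$ (there are $8{,}712$ labeled graphs) has independence number at most~$3$. By Proposition~\ref{prop:alpha}, $E_4\not\vm G$, so $\Rvm(4)>10$.

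To make this checkable by hand, or at least by a short independent script, I would use two facts. First, the orbit is closed under local complementation at every vertex, so it suffices to verify closure of the listed set of $8{,}712$ graphs under each $*v$. Second, each listed graph must have $\alpha\leq 3$. Both are finite, mechanical checks.

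\textbf{Upper bound.} Let $G$ have $11$ vertices. If $G$ is disconnected with components $G_1,\dots,G_c$, Lemma~\ref{lem:disconnected} lets me bound component by component, using $\Rvm(2)=3$ and $\Rvm(3)=7$:
\begin{itemize}
\item If $c\geq 4$, each component contributes at least~$1$, so the sum is at least~$4$.
\item If $c=3$, the largest component has at least $4\geq 3$ vertices and contributes at least~$2$; the other two contribute at least $1$ each, giving at least~$4$.
\item If $c=2$ with sizes $n_1\geq n_2$, then $n_1\geq 6$. If $n_1\geq 7$, the larger component contributes at least~$3$ and the other at least~$1$. If $n_1=6$, then $n_2=5$, and both components have at least $3$ vertices, so each contributes at least~$2$.
\end{itemize}
In every disconnected case the maximum independence number over the orbit is at least~$4$. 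For connected $G$ on $11$ vertices, I would rely on the exhaustive computation of Section~\ref{sec:method}. That run covers all $1{,}018{,}997{,}864$ graphs and finds none in Phase~3, so every graph lies in Phase~1 or Phase~2 and some orbit member has $\alpha\geq 4$. The implementations were cross-validated, and I would add that each Phase~2 verdict can be certified by recording the sequence of local complementations leading to the witness graph.

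\textbf{Main obstacle.} The hard step is the $n=11$ upper bound, which is a purely computational claim over roughly $10^9$ graphs and has no conceptual reduction. I would first try to shrink the search by restricting to connected graphs and to $\alpha(G)\leq 3$; Phase~1 already removes most graphs. The only remaining honest justification is the independent double implementation. A structural shortcut would be to show that every $11$-vertex graph contains a vertex $v$ with $G-v$, $G*v-v$, or a pivot minor outside the six extremal classes, but I would not count on that succeeding.
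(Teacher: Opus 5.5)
Your proposal is correct and follows the paper's proof. The lower bound is an untruncated BFS certificate that every member of the $8{,}712$-element LC orbit of a $10$-vertex Phase~3 graph has $\alpha\le 3$, and the upper bound is the exhaustive $n=11$ classification with no Phase~3 graphs; your by-hand treatment of disconnected $11$-vertex graphs is essentially the $n=11$ case of Proposition~\ref{prop:disconnected-k4}, whereas the paper's computation simply includes them. The shortcut you doubt actually reduces to a small finite check: if $E_4\not\vm G$ with $|G|=11$, then $E_4\not\vm G-v$ for every $v$, so $G$ must be a one-vertex extension of a $10$-vertex Phase~3 graph, and given the $n=10$ classification only $6\cdot 2^{10}$ such extensions need testing.
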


\begin{proof}
  \textbf{Lower bound ($\Rvm(4)\geq 11$).}
  Among the $12{,}005{,}168$ non-isomorphic graphs on~$10$ vertices,
  the exhaustive classification identified exactly six graphs~$G$ with
  $E_4\not\vm G$.  For each, the complete LC orbit was enumerated by BFS
  with no truncation; every orbit has exactly $8{,}712$ elements, and
  every orbit member~$G'$ satisfies $\alpha(G')\leq 3<4$.  By
  Proposition~\ref{prop:alpha}, $E_4\not\vm G$ for each of these six
  graphs.

  The six counterexamples, listed in \texttt{graph6}
  format~\cite{McKayPiperno2014}, are:
  \begin{center}
    \begin{tabular}{clc}
      \toprule
      \# & \texttt{graph6} code & $|E|$ \\
      \midrule
      $G_1$ & \verb|ICQ`fm}~w| & $27$ \\
      $G_2$ & \verb|ICQ`fn}no| & $26$ \\
      $G_3$ & \verb|ICQdbh{NO| & $21$ \\
      $G_4$ & \verb|ICQb`pzlw| & $23$ \\
      $G_5$ & \verb|ICQb`twlw| & $22$ \\
      $G_6$ & \verb|IUZ~vz}}o| & $35$ \\
      \bottomrule
    \end{tabular}
  \end{center}
  The classification was performed independently in Python and C++; both
  found exactly the same six \texttt{graph6} strings.  Hence
  $\Rvm(4)\geq 11$.

  \smallskip
  \textbf{Upper bound ($\Rvm(4)\leq 11$).}
  All $1{,}018{,}997{,}864$ non-isomorphic graphs on~$11$ vertices were
  classified:
  \begin{itemize}
    \item Phase~1 ($\alpha(G)\geq 4$):
      $880{,}105{,}560$ graphs ($86.4\%$).
    \item Phase~2 ($\alpha(G)\leq 3$, orbit search finds $\alpha\geq 4$):
      $138{,}892{,}304$ graphs ($13.6\%$).
    \item Phase~3 ($E_4\not\vm G$): $0$ graphs.
  \end{itemize}
  No $11$-vertex graph falls into Phase~3.  By
  Proposition~\ref{prop:alpha}, $E_4\vm G$ for every $11$-vertex
  graph~$G$.  Hence $\Rvm(4)\leq 11$.
\end{proof}

\subsection{Counterexample counts across orders}

The number of counterexamples drops rapidly with~$n$.

\begin{proposition}\label{prop:counts}
  The number of non-isomorphic graphs~$G$ on~$n$ vertices with
  $E_4\not\vm G$ is as follows:
  \[
    \begin{array}{c|rr}
      n & \textup{total non-iso.\ graphs} & \textup{counterexamples}
      \\ \hline
      8  & 12{,}346         & 953 \\
      9  & 274{,}668        & 588 \\
      10 & 12{,}005{,}168   & 6 \\
      11 & 1{,}018{,}997{,}864 & 0
    \end{array}
  \]
\end{proposition}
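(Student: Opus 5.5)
This proposition is a computational statement, so the plan is to run the three-phase classification of Section~\ref{sec:method} with $k=4$ on every order $n\in\{8,9,10,11\}$ and record the number of Phase~3 graphs. The rows $n=10$ and $n=11$ are exactly the outputs already used in the proof of Theorem~\ref{thm:main}: six Phase~3 graphs at $n=10$ and none at $n=11$. The totals $12{,}346$, $274{,}668$, $12{,}005{,}168$, $1{,}018{,}997{,}864$ are the standard counts of non-isomorphic graphs (OEIS A000088), and they can be confirmed as the number of lines produced by \texttt{geng}. What remains is to carry out the same pipeline for $n=8$ and $n=9$.

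For these smaller orders I would use Lemma~\ref{lem:disconnected} to organize the counts and to get an independent check. A disconnected graph $G=G_1\cup\dots\cup G_c$ avoids $E_4$ if and only if $\sum_i m(G_i)\le 3$, where $m(H)=\max_{H'\in[H]_{\LC}}\alpha(H')$.
\begin{itemize}
\item By Proposition~\ref{prop:Rvm2}, every component on at least $3$ vertices has $m\ge 2$.
\item By Proposition~\ref{prop:Rvm3}, every component on at least $7$ vertices has $m\ge 3$.
\end{itemize}
Hence the disconnected counterexamples on $n\in\{8,9\}$ vertices can be listed by hand from small pieces. They have at most three components: one connected $E_3$-avoiding component on at most $6$ vertices together with a $K_1$ or $K_2$ (each with $m=1$), or two components with $m$-values $2$ and $1$, and so on. Each such configuration requires $n_i\le 6$ for the component with $m\le 2$. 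This reduces the disconnected part of the count to the connected counts on at most $7$ vertices, which the BFS can also produce. Adding the connected Phase~3 counts obtained by BFS should reproduce $953$ and $588$. The same decomposition at $n=10$ shows that every one of the six counterexamples is connected, since any split would need a component with $m\le 2$ on at least $7$ vertices, which Proposition~\ref{prop:Rvm3} rules out. This is consistent with the earlier computation.

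The correctness of each Phase~3 verdict rests on Proposition~\ref{prop:alpha} and on the BFS exploring the full orbit. The LC orbit of a connected graph is finite and closed under each map $G\mapsto G*v$, which is an involution. So the BFS from $G$ reaches exactly $[G]_{\LC}$, and checking $\alpha<4$ on every visited graph certifies $E_4\not\vm G$. Phase~2 graphs are certified by an explicit witness.

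I expect the main obstacle to be trust in the numbers rather than any mathematical difficulty. Certifying that no Phase~3 graph was misclassified at $n=8,9$ requires full orbit enumeration for hundreds of graphs. For this I would rely on the independent Python and C++ implementations agreeing, as in Subsection~\ref{subsec:cross}. A further check is that the Phase~3 counts must be invariant under LC-equivalence, in the sense that they must be unions of complete isomorphism-closed LC classes. I would also verify that consistency.
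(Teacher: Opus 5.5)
Your proposal is correct and takes the same approach as the paper, which also treats every entry as the output of the exhaustive \texttt{geng}-plus-LC-orbit-BFS three-phase classification of Section~\ref{sec:method}. Your extra cross-check via Lemma~\ref{lem:disconnected} is sound and can be stated more sharply: there are no disconnected counterexamples at $n=9$ (this is Proposition~\ref{prop:disconnected-k4}), and at $n=8$ they are exactly the graphs $H\cup K_2$ with $H$ a connected $6$-vertex graph avoiding $E_3$.
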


\begin{proof}
  Each entry is the output of the exhaustive three-phase classification
  described in Section~\ref{sec:method}.  At each order $n\in\{8,\dots,
  11\}$, all non-isomorphic graphs were enumerated by
  \texttt{geng}~\cite{McKayPiperno2014} and classified.
\end{proof}

\begin{corollary}\label{cor:values}
  $\Rvm(2) = 3$, \;\; $\Rvm(3) = 7$, \;\; $\Rvm(4) = 11$.
\end{corollary}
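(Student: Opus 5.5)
This corollary simply collects results already established, so the plan is to cite them in order rather than argue anything new. The value $\Rvm(2)=3$ is Proposition~\ref{prop:Rvm2}. Its lower bound comes from the LC-invariance of $K_2$, and its upper bound from the single nontrivial case $K_3*0\cong P_3$. The value $\Rvm(3)=7$ is Proposition~\ref{prop:Rvm3}. There the prism $\overline{C_6}$ gives the lower bound. The upper bound splits into connected and disconnected graphs on~$7$ vertices: Lemma~\ref{lem:disconnected} combined with $\Rvm(2)=3$ handles the disconnected ones, and exhaustive orbit search handles the connected ones. The value $\Rvm(4)=11$ is Theorem~\ref{thm:main}.

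For $\Rvm(4)$ I would present the two directions separately. For $\Rvm(4)\geq 11$, exhibit any one of the six graphs $G_1,\dots,G_6$ listed in the proof of Theorem~\ref{thm:main}. The certificate is its full LC orbit of $8{,}712$ labeled graphs, each with $\alpha\leq 3$; Proposition~\ref{prop:alpha} then gives $E_4\not\vm G_i$. For $\Rvm(4)\leq 11$, use the Phase~1/Phase~2 classification of all $11$-vertex graphs. Every such graph has a witness $G'\in[G]_{\LC}$ with $\alpha(G')\geq 4$, so Proposition~\ref{prop:alpha} again gives $E_4\vm G$.

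One formal point deserves a sentence. Definition~\ref{def:Rvm} defines $\Rvm(k)$ as a minimum, so each lower bound must come from a specific $n$ with $n=\Rvm(k)-1$, not merely from some smaller $n$. For example, $\Rvm(4)\geq 11$ needs a counterexample on exactly $10$ vertices. The proofs above supply counterexamples at exactly $2$, $6$, and $10$ vertices respectively, so this is satisfied.

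The only step carrying real weight is the upper bound for $k=4$, since it rests entirely on the exhaustive $n=11$ computation. There is no conceptual gap here, because Proposition~\ref{prop:alpha} converts each computational result directly into the statement. So the plan for this step is to state explicitly that the Phase~2 witnesses found by the two independent implementations (Section~\ref{subsec:cross}) serve as the certificates.
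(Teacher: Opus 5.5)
Your proposal is correct and matches the paper, which proves the corollary simply by citing Propositions~\ref{prop:Rvm2} and~\ref{prop:Rvm3} and Theorem~\ref{thm:main}. On your ``formal point'': the subtlety that the minimum definition actually raises is that a single counterexample on $\Rvm(k)-1$ vertices must also rule out every smaller $n$. This holds because deleting a vertex from a counterexample gives a smaller counterexample, by transitivity of $\vm$. It is implicit in the cited results, so nothing is missing from your proof.
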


\begin{proof}
  Propositions~\ref{prop:Rvm2} and~\ref{prop:Rvm3} and
  Theorem~\ref{thm:main}.
\end{proof}

\begin{proposition}\label{prop:disconnected-k4}
  Every disconnected graph on at least $9$ vertices contains $E_4$
  as a vertex-minor.
\end{proposition}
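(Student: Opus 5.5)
Let $G$ be disconnected with $n=|G|\geq 9$ vertices, and write $G=G_1\cup\cdots\cup G_c$ with $c\geq 2$ components of orders $n_1\geq n_2\geq\cdots\geq n_c$. By Lemma~\ref{lem:disconnected} and Proposition~\ref{prop:alpha} it suffices to show
\[
\sum_{i=1}^{c}\mu(G_i)\;\geq\;4,
\qquad
\mu(H):=\max_{H'\in[H]_{\LC}}\alpha(H').
\]
For these component-wise bounds we use the earlier values. Every graph has $\mu\geq 1$. Since $\Rvm(2)=3$, we get $\mu(H)\geq 2$ whenever $|H|\geq 3$. Since $\Rvm(3)=7$ (Proposition~\ref{prop:Rvm3}), we get $\mu(H)\geq 3$ whenever $|H|\geq 7$. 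The proof is then a case analysis on the partition $(n_1,\dots,n_c)$ of $n\geq 9$.

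\emph{Case $c\geq 4$.} Each component contributes at least $1$, so the sum is at least $4$.

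\emph{Case $c=3$.} The sum is at least $\mu(G_1)+2$. Because $n_1\geq \lceil 9/3\rceil=3$, we have $\mu(G_1)\geq 2$, so the total is at least $4$.

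\emph{Case $c=2$.} We have $n_1\geq 5$, and we split on $n_2$.
\begin{itemize}
\item If $n_2\geq 3$, both components contribute at least $2$, giving at least $4$.
\item If $n_2\leq 2$, then $n_1\geq n-2\geq 7$. Hence $\mu(G_1)\geq 3$ by $\Rvm(3)=7$, and $\mu(G_2)\geq 1$, giving at least $4$.
\end{itemize}

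The only tight spot is $c=2$ with $n_2\leq 2$. There the argument genuinely needs $\Rvm(3)\leq 7$, and this is exactly where the hypothesis $n\geq 9$ (rather than $n\geq 8$) is used. For $n=8$ the split $7+1$ still works, but the split $6+2$ fails: the triangular prism $\overline{C_6}$ has $\mu=2$ and $K_2$ has $\mu=1$, so the total is only $3$. This gives a consistency check against the $953$ counterexamples reported at $n=8$ in Proposition~\ref{prop:counts}.

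Beyond that, I expect no real obstacle. The care needed is to check that the monotonicity used is legitimate: $\Rvm(k)\leq m$ applies to every graph on exactly $m'$ vertices for all $m'\geq m$. This holds because deleting vertices down to $m$ vertices yields a vertex-minor, and $\vm$ is transitive.
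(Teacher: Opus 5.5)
Your proof is correct and follows essentially the same route as the paper. It reduces via Lemma~\ref{lem:disconnected} and Proposition~\ref{prop:alpha} to summing the component-wise bounds $1,2,3$ obtained from $\Rvm(2)=3$ and $\Rvm(3)=7$, runs a short case analysis (organized by the number of components $c$ rather than by component orders, a cosmetic difference), and uses the same sharpness example $\overline{C_6}\cup K_2$. Your monotonicity justification via vertex deletion and transitivity of $\vm$ spells out a step the paper uses implicitly.
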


\begin{proof}
  For a graph~$H$, write
  $\beta(H)=\max_{H'\in[H]_{\LC}}\alpha(H')$.  By
  Proposition~\ref{prop:alpha}, $E_4\vm H$ if and only if
  $\beta(H)\geq 4$.  From $\Rvm(2)=3$
  (Proposition~\ref{prop:Rvm2}) and $\Rvm(3)=7$
  (Proposition~\ref{prop:Rvm3}):
  \[
    \beta(H)\geq
    \begin{cases}
      3 & \text{if $|H|\geq 7$,}\\
      2 & \text{if $|H|\geq 3$,}\\
      1 & \text{if $|H|\geq 1$.}
    \end{cases}
  \]
  The third bound holds because $\alpha(H)\geq 1$ for every nonempty
  graph.

  Let $G=G_1\cup\cdots\cup G_c$ be a disconnected graph with
  $|G|=n\geq 9$ and $c\geq 2$ connected components, where
  $|G_i|=n_i$ and $n_1\geq\cdots\geq n_c\geq 1$.  By
  Lemma~\ref{lem:disconnected},
  $\beta(G)=\sum_{i=1}^{c}\beta(G_i)$.

  \smallskip
  \emph{Case~1: $n_1\geq 7$.}
  Then $\beta(G_1)\geq 3$.  Since $c\geq 2$, there exists $j\geq 2$
  with $\beta(G_j)\geq 1$, giving $\beta(G)\geq 3+1=4$.

  \smallskip
  \emph{Case~2: $n_i\leq 6$ for all~$i$.}
  We distinguish three subcases by the number of components of order
  at least~$3$.

  If at least two components satisfy $n_i\geq 3$, each contributes
  $\beta\geq 2$, so $\beta(G)\geq 2+2=4$.

  If exactly one component has $n_i\geq 3$, say~$G_1$ with
  $3\leq n_1\leq 6$, then every remaining component has $n_j\leq 2$.
  The total order of the remaining components is
  $n-n_1\geq 9-6=3$, so there are at least
  $\lceil 3/2\rceil=2$ of them, each contributing $\beta\geq 1$.
  Hence $\beta(G)\geq 2+2=4$.

  If every component has $n_i\leq 2$, then
  $c\geq\lceil n/2\rceil\geq 5$, and each contributes $\beta\geq 1$,
  so $\beta(G)\geq 5\geq 4$.

  \smallskip
  In every case $\beta(G)\geq 4$, so $E_4\vm G$ by
  Proposition~\ref{prop:alpha}.

  \smallskip
  The bound $n\geq 9$ is sharp: the graph
  $\overline{C_6}\cup K_2$ on $8$~vertices satisfies
  $\beta(\overline{C_6})=2$ (since $\Rvm(3)>6$) and
  $\beta(K_2)=1$, giving
  $\beta(\overline{C_6}\cup K_2)=3<4$.
\end{proof}

\section{Extremal graph analysis}\label{sec:extremal}

We describe the structural properties of the six counterexamples at
$n=10$.

\begin{theorem}[Extremal structure; {$=$ Theorem~\ref{thm:B}}]\label{thm:extremal}
  Among all $12{,}005{,}168$ non-isomorphic graphs on~$10$ vertices,
  exactly six do not contain~$E_4$ as a vertex-minor.  These six graphs
  represent, up to isomorphism, exactly five distinct
  LC-equivalence classes; each labeled LC orbit has
  cardinality~$8{,}712$.
\end{theorem}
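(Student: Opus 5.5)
The statement has three parts, and I would treat each as a separate certified computation layered on the exhaustive classification of Section~\ref{sec:method}.

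The first part is that exactly six graphs on $10$ vertices avoid $E_4$. This is the $n=10$ row of Proposition~\ref{prop:counts}, and its lower-bound half was already justified in the proof of Theorem~\ref{thm:main}. I would simply invoke that. Every one of the $12{,}005{,}168$ outputs of \texttt{geng} is placed in Phase~1, 2 or 3, and the Phase~3 graphs are exactly the six listed \texttt{graph6} codes. By Proposition~\ref{prop:alpha}, Phase~3 is equivalent to $E_4\not\vm G$. The two independent implementations supply the cross-check.

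The second part is that each labeled orbit has size $8{,}712$. For each $G_i$, I would run the untruncated BFS of Subsection~\ref{subsec:bfs} on the labeled vertex set $\{0,\dots,9\}$ and record the number of distinct adjacency tuples visited. Completeness of the BFS is immediate, because local complementation is an involution ($(G*v)*v=G$). So the set of graphs reachable by forward moves is closed under the inverse moves and is exactly $[G_i]_{\LC}$. As a sanity check I would also verify $\alpha\le 3$ on all $8{,}712$ members, which re-confirms Phase~3.

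The third part is that the six graphs give exactly five LC classes up to isomorphism. I would canonically label every member of each orbit (for example with nauty) and form the set $C_i$ of canonical forms occurring in $[G_i]_{\LC}$. The isomorphism class of an orbit determines $C_i$, and $G_i$ and $G_j$ are LC-equivalent up to isomorphism if and only if $C_i\cap C_j\neq\emptyset$, in which case $C_i=C_j$. More simply, I would test whether the canonical form of $G_j$ lies in $C_i$. This partitions $\{G_1,\dots,G_6\}$, and I expect exactly one pair to merge, leaving five classes. Separation of the other pairs can be certified cheaply by an LC-invariant such as the multiset of canonical forms, or the Bouchet interlace/rank invariants. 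The merging pair is certified by an explicit short sequence of local complementations together with an isomorphism, both extracted from the BFS parent pointers.

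The main obstacle is rigor rather than computation: making the claims "exactly six" and "exactly five" auditable. For the five-class count I would give the explicit LC-plus-isomorphism witness for the merging pair. For the distinct pairs I would rely on disjointness of the canonical-form sets, which is a finite but still computer-dependent check. A secondary subtlety is that "each labeled orbit has cardinality $8{,}712$" should be checked to be independent of the chosen labeling. This holds because relabeling is a bijection that commutes with local complementation, so orbit size is an isomorphism invariant.
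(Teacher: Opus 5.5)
Your proposal is correct and follows essentially the same route as the paper. It uses the exhaustive three-phase classification for the count of six, an untruncated BFS of each labeled orbit for the size $8{,}712$, and canonical labeling of orbit members to show that only $G_1$ and $G_2$ share an LC class up to isomorphism. Your additions (an explicit LC sequence plus isomorphism witnessing that $G_1$ and $G_2$ merge, and the remark that labeled orbit size is invariant under relabeling) only make the paper's computational certificate more auditable.
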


\begin{proof}
  Starting from each of the six \texttt{graph6} representatives listed
  in the proof of Theorem~\ref{thm:main}, the full LC orbit was
  enumerated by BFS\@.  Every orbit has exactly $8{,}712$ elements.  The
  orbit of~$G_1$ contains a labeled graph isomorphic to~$G_2$; thus
  $G_1$ and~$G_2$ are LC-equivalent (up to isomorphism).  No other pair
  among the six shares an LC orbit: the orbits of $G_3$, $G_4$, $G_5$,
  and~$G_6$ are pairwise disjoint and disjoint from the orbit of
  $G_1$--$G_2$; canonical labeling confirmed that no graph in any
  one orbit is isomorphic to a graph in another.  Hence the
  six graphs represent $1+4=5$ distinct LC-equivalence classes up to
  isomorphism.
\end{proof}

\subsection{Structural invariants}

Table~\ref{tab:structure} summarizes the key invariants of the six
extremal graphs.

\begin{table}[ht]
  \centering
  \caption{Structural properties of the six counterexamples at $n=10$.
    Here $\alpha$, $\omega$, $\chi$ denote the independence number,
    clique number, and chromatic number, respectively, and \textrm{rw}
    denotes rank-width.}
  \label{tab:structure}
  \begin{tabular}{clccccccl}
    \toprule
    & \texttt{graph6}
    & $|E|$ & $\alpha$ & $\omega$ & $\chi$ & $\mathrm{rw}$
    & $\mathrm{diam}$ & degree sequence \\
    \midrule
    $G_1$ & \verb|ICQ`fm}~w|
      & 27 & 3 & 4 & 5 & 2 & 2 & $[9,7^2,5^5,3^2]$ \\
    $G_2$ & \verb|ICQ`fn}no|
      & 26 & 3 & 4 & 5 & 2 & 2 & $[7^3,5^5,3^2]$ \\
    $G_3$ & \verb|ICQdbh{NO|
      & 21 & 3 & 4 & 4 & 2 & 3 & $[5^6,3^4]$ \\
    $G_4$ & \verb|ICQb`pzlw|
      & 23 & 3 & 4 & 4 & 2 & 3 & $[7^2,5^4,3^4]$ \\
    $G_5$ & \verb|ICQb`twlw|
      & 22 & 3 & 4 & 4 & 2 & 3 & $[7,5^5,3^4]$ \\
    $G_6$ & \verb|IUZ~vz}}o|
      & 35 & 2 & 4 & 6 & 2 & 2 & $[7^{10}]$ \\
    \bottomrule
  \end{tabular}
\end{table}

\begin{proposition}[Common properties]\label{prop:structure}
  The six extremal graphs share the following properties:
  \begin{enumerate}
    \item[\textup{(i)}]   Each is connected, non-planar, and has
                           girth~$3$.
    \item[\textup{(ii)}]  Each has clique number $\omega=4$ and
                           rank-width~$2$.
    \item[\textup{(iii)}] Each has $\alpha(G)\leq 3$, which is
                           necessary: any graph with $\alpha\geq 4$
                           trivially contains~$E_4$ as a vertex-minor.
    \item[\textup{(iv)}]  None is isomorphic to the Petersen graph or its
                           complement.
    \item[\textup{(v)}]   None is a circle graph: each contains the
                           wheel~$W_5$ as a vertex-minor
                           (one of the three Bouchet
                           obstructions~\cite{Bouchet1994}).
  \end{enumerate}
\end{proposition}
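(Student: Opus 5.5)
Each of the five items concerns six explicitly given graphs (the \texttt{graph6} codes in the proof of Theorem~\ref{thm:main}), so the plan is to verify each item per graph. I will use hand arguments where they are short and say exactly what is computed where they are not. First I decode the six codes into adjacency matrices and recompute the invariants in Table~\ref{tab:structure}.

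Item~(iii) needs no computation. If $\alpha(G)\geq 4$, then $G$ itself is in $[G]_{\LC}$, and Proposition~\ref{prop:alpha} gives $E_4\vm G$, contradicting the choice of the $G_i$. The values $\alpha=3$ (or $2$ for $G_6$) then come from the table.

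For~(i):
\begin{itemize}
  \item Connectivity follows from Proposition~\ref{prop:disconnected-k4}: a disconnected graph on $10\geq 9$ vertices contains $E_4$, so every counterexample is connected.
  \item Girth~$3$ follows from $\omega=4$, which gives a triangle.
  \item Non-planarity is immediate for $G_1,G_2,G_4,G_6$: they have more than $3n-6=24$ edges. For $G_3$ ($21$ edges) and $G_5$ ($22$ edges) I will exhibit an explicit $K_5$- or $K_{3,3}$-subdivision, found by computer and then checked by hand. If that is awkward, a standard planarity test also suffices.
\end{itemize}

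For~(ii), the clique number~$4$ is checked by exhibiting a $K_4$ and verifying that no $5$-clique exists, which is a small exhaustive search.

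Rank-width~$2$ is the most delicate item. For the upper bound I will give, for each graph, an explicit subcubic tree layout in which every edge cut has cut-rank at most~$2$ over $\mathrm{GF}(2)$. For the lower bound I need rank-width $\geq 2$, i.e.\ that the graph is not distance-hereditary. Distance-hereditary graphs are exactly the graphs of rank-width at most~$1$, and they are closed under vertex-minors. It therefore suffices to exhibit one induced subgraph isomorphic to $C_5$, or to a house, gem or domino, in each graph. Alternatively, the $W_5$ found in~(v) already has rank-width~$2$ and serves as the witness.

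Item~(iv) is a one-line invariant comparison. The Petersen graph is $3$-regular with $15$ edges and its complement is $6$-regular with $30$ edges, whereas the $G_i$ have edge counts $27,26,21,23,22,35$.

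For~(v) I invoke Bouchet's theorem: a graph is a circle graph if and only if it has none of $W_5$, $W_7$, $\mathrm{BW}_3$ as a vertex-minor. It suffices to give, for each $G_i$, an explicit sequence of local complementations followed by deletion of $4$ vertices that produces $W_5$. I would find it by searching the orbit of size $8{,}712$ already enumerated in Theorem~\ref{thm:extremal}: for each orbit member, test each $6$-subset for an induced $W_5$ (a vertex of degree~$5$ whose other five vertices induce $C_5$). The resulting certificate is a short sequence that can be recorded and checked by hand.

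I expect~(v), together with the rank-width upper bound in~(ii), to be the main obstacle. Neither reduces to a simple invariant, so the proof must either present certificates for each of the six graphs or rely on computation in the same way as the earlier theorems.
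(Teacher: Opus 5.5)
Your plan follows the paper's proof in substance. Connectivity comes from Proposition~\ref{prop:disconnected-k4}, item~(iii) from Proposition~\ref{prop:alpha}, and item~(v) from Bouchet's theorem plus a search of each $8{,}712$-element orbit for an induced $W_5$. Everything else is checked graph by graph. Your hand arguments are valid and a little more explicit than the paper's ``verified directly'':
\begin{itemize}
\item girth~$3$, because a $K_4$ contains a triangle;
\item item~(iv), because the Petersen graph and its complement have $15$ and $30$ edges, against $27,26,21,23,22,35$;
\item $\mathrm{rw}\geq 2$, because the graphs are not distance-hereditary, or because $W_5\vm G$ and $\mathrm{rw}(W_5)=2$.
\end{itemize}

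There is one false step. $G_4$ has $23$ edges, which does not exceed $3n-6=24$, so Euler's bound says nothing about $G_4$. It must be treated like $G_3$ and $G_5$, with an explicit Kuratowski witness or a planarity test. Witnesses are easy to state if you label the vertices $0,\dots,9$ in \texttt{graph6} order:
\begin{itemize}
\item $G_4$ contains $K_{3,3}$ as a subgraph with parts $\{2,3,8\}$ and $\{6,7,9\}$;
\item $G_3$ contains one with parts $\{2,3,5\}$ and $\{7,8,9\}$;
\item $G_5$ contains one with parts $\{2,3,9\}$ and $\{6,7,8\}$.
\end{itemize}
With that correction your proposal is complete in the same sense as the paper's proof. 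Both rely on computation for the rank-width upper bound and the $W_5$ search.
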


\begin{proof}
  For~(i), connectedness follows from
  Proposition~\ref{prop:disconnected-k4}: every disconnected graph on
  at least $9$ vertices contains~$E_4$ as a vertex-minor, so any
  counterexample must be connected.  Non-planarity and girth~$3$ were
  verified from the adjacency matrices.  Properties~(ii) and~(iv) were
  also verified directly.  For~(iii), the necessity of $\alpha\leq 3$
  follows from Proposition~\ref{prop:alpha}: $G$ itself is in
  $[G]_{\LC}$, so $\alpha(G)\geq 4$ would imply $E_4\vm G$.

  For~(v), Bouchet~\cite{Bouchet1994} proved that a graph is a circle
  graph if and only if no graph in its LC orbit contains $W_5$,
  $\mathrm{BW}_3$ (the bipartite wheel on three spokes), or $W_7$ as an
  induced subgraph.  For each
  counterexample, exhaustive search over the full LC orbit found~$W_5$
  as an induced subgraph of some orbit member, certifying that none of
  the six graphs is a circle graph.  (Neither $\mathrm{BW}_3$ nor $W_7$
  was found.)
\end{proof}

\subsection{The graph $G_6=C_5\nabla C_5$}

The unique $7$-regular counterexample is
\[
  G_6 \;=\; \overline{C_5\cup C_5} \;=\; C_5\nabla C_5.
\]
Equivalently, $\overline{G_6}$ is the disjoint union $C_5\cup C_5$,
where the two copies of $C_5$ have vertex sets $\{0,1,2,3,4\}$ and
$\{5,6,7,8,9\}$.  Here $G\nabla H$ denotes the
graph on $V(G)\sqcup V(H)$ formed by adding all edges between~$V(G)$
and~$V(H)$ to~$G\cup H$.

This graph has $\alpha(G_6)=2$ (the smallest independence number among
the counterexamples) and $\chi(G_6)=6$ (the largest chromatic number).
Its adjacency spectrum is
\[
  7,\quad
  \underbrace{(\varphi-1),\;\dots,\;(\varphi-1)}_{4},\quad
  \underbrace{(-\varphi),\;\dots,\;(-\varphi)}_{4},\quad
  -3,
\]
where $\varphi=(1+\sqrt{5})/2$ is the golden ratio.  In particular,
every eigenvalue lies in $\mathbb{Q}(\sqrt{5})$, and the non-trivial
eigenvalues $\varphi-1$ and $-\varphi$ each have multiplicity~$4$.

\subsection{LC-class structure}

The five LC-equivalence classes among the counterexamples are:
\begin{center}
  \begin{tabular}{cll}
    \toprule
    class & representatives & $|E|$ \\
    \midrule
    I   & $G_1$, $G_2$ & 27, 26 \\
    II  & $G_3$         & 21 \\
    III & $G_4$         & 23 \\
    IV  & $G_5$         & 22 \\
    V   & $G_6$         & 35 \\
    \bottomrule
  \end{tabular}
\end{center}
Class~I is the only class containing two non-isomorphic representatives.
Since all six counterexamples have rank-width~$2$ and rank-width is an
LC invariant~\cite{Oum2005}, every labeled graph in any of the
counterexample orbits has rank-width~$2$.

\begin{remark}[Negative certificates]\label{rmk:certificates}
  For each counterexample~$G$, the BFS enumeration of $[G]_{\LC}$
  constitutes a finite, verifiable negative certificate for
  $E_4\not\vm G$: one checks $\alpha(G')\leq 3$ for each of the
  $8{,}712$ orbit members~$G'$.  By Proposition~\ref{prop:alpha}, this
  is both necessary and sufficient.  The certificate is reproducible
  from the \texttt{graph6} code of~$G$ and the BFS algorithm alone.
\end{remark}

\section{Conjectures and bounds}\label{sec:conjectures}

\subsection{The pattern $4k-5$}

The three known values $\Rvm(2)=3$, $\Rvm(3)=7$, $\Rvm(4)=11$
match the formula $4k-5$ exactly:
\[
  4\cdot 2 - 5 = 3,\qquad
  4\cdot 3 - 5 = 7,\qquad
  4\cdot 4 - 5 = 11.
\]
This suggests that the next value may be $\Rvm(5)=15$, as stated in
Conjecture~\ref{conj:C}.  However, the asymptotic lower
bound~\eqref{eq:lower-bound} shows that no global linear law can hold
for all~$k$.

\subsection{Tension with asymptotic bounds}

Any global linear extrapolation conflicts with the known quadratic lower
bound.
Ascoli et al.~\cite{AFFMP2026} prove
\begin{equation}\label{eq:lower-bound}
  \Rvm(k)
  \;\geq\;
  (1+o(1))\,\frac{k^2}{2\log_2 3}
  \qquad(k\to\infty).
\end{equation}
Since $k^2/(2\log_2 3)>4k-5$ for all sufficiently large~$k$, the
formula $4k-5$ must eventually fail.  The leading-term crossover occurs
near $k=12$:
\[
  4\cdot 13 - 5 = 47,
  \qquad\text{while}\qquad
  \frac{13^2}{2\log_2 3} \approx 53.3.
\]
Thus any bound $\Rvm(k)\geq(1-\varepsilon)\,k^2/(2\log_2 3)$ for
small~$\varepsilon$ contradicts $4k-5$ once $k\gtrsim 12$.

For $k\in\{2,3,4\}$, however, the asymptotic bound is not effective:
the leading term evaluates to approximately $1.26$, $2.84$, and $5.05$,
all well below the true values $3$, $7$, $11$.  Thus the asymptotic
bound leaves ample room for the small-$k$ pattern and, in particular,
for Conjecture~\ref{conj:C}.

\subsection{Comparison with the exponential upper bound}

\begin{proposition}\label{prop:bounds}
  For $k\in\{2,3,4\}$:
  \[
    \frac{k^2}{2\log_2 3}
    \;<\;
    \Rvm(k)
    \;\leq\;
    2^k - 1.
  \]
  Equality in the upper bound holds at $k\in\{2,3\}$ but fails at
  $k=4$.
\end{proposition}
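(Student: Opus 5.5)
The statement is a direct numerical comparison, so I will plug in the exact values from Corollary~\ref{cor:values}, namely $\Rvm(2)=3$, $\Rvm(3)=7$ and $\Rvm(4)=11$. Against these I will evaluate both sides of the claimed inequalities for each $k\in\{2,3,4\}$. No further structural input is needed: the upper bound $2^k-1$ is stated in~\eqref{eq:general-bounds}, but here it can also be checked directly against the computed values.

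For the lower inequality I need $k^2/(2\log_2 3)<\Rvm(k)$. The first step is to bound $\log_2 3$ from below by a rational number. The estimate $\log_2 3>3/2$ suffices, since $3^2=9>8=2^3$. This gives $2\log_2 3>3$, and hence $k^2/(2\log_2 3)<k^2/3$. For the three values of $k$ this yields
\[
\frac{4}{3}<3,\qquad \frac{9}{3}=3<7,\qquad \frac{16}{3}<11,
\]
so the strict lower bound holds in every case. The crude rational bound is enough throughout, so no decimal approximation of $\log_2 3$ is required.

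For the upper inequality I compare the exact values with $2^k-1$. For $k=2$ we have $2^2-1=3=\Rvm(2)$, and for $k=3$ we have $2^3-1=7=\Rvm(3)$. In both cases equality holds. For $k=4$, $\Rvm(4)=11<15=2^4-1$, so the inequality is strict and equality fails there. That failure is exactly the content of Theorem~\ref{thm:main}.

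There is essentially no obstacle in this argument. The only point requiring care is to keep the lower bound strict and rigorous, which is why I use the rational estimate $\log_2 3>3/2$ rather than a numerical value. All of the mathematical depth lies in the previously established exact values, and in particular in the exhaustive classification behind Theorem~\ref{thm:main}. This proposition simply repackages those values.
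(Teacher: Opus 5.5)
Your proposal is correct and follows the paper's proof. Both read off $\Rvm(2)=3$, $\Rvm(3)=7$, $\Rvm(4)=11$ from Corollary~\ref{cor:values} and compare them with $2^k-1$; the paper cites the general upper bound of Ascoli et al.\ rather than checking it against the values, which makes no difference here. Your rational estimate $\log_2 3>3/2$ spells out the lower-bound comparison that the paper leaves as ``immediate.''
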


\begin{proof}
  The upper bound is~\cite[Theorem~1.4]{AFFMP2026}.  The numerical
  comparisons are immediate from Corollary~\ref{cor:values}:
  $\Rvm(2)=3=2^2-1$, $\Rvm(3)=7=2^3-1$, and
  $\Rvm(4)=11<15=2^4-1$.
\end{proof}

\subsection{Explicit lower bounds from building blocks}\label{subsec:explicit-lower}

The asymptotic lower bound~\eqref{eq:lower-bound} does not yield
concrete numerical bounds for any particular~$k$.  The following
observation, combined with our computed values, produces the first
explicit lower bounds for~$\Rvm(k)$.

\begin{proposition}[Building-block lower bound]\label{prop:building}
  Let $H_1,\dots,H_r$ be graphs on pairwise disjoint vertex sets with
  $\beta(H_i)=j_i$, where
  $\beta(H)=\max_{H'\in[H]_{\LC}}\alpha(H')$.  If
  $j_1+\cdots+j_r\leq k-1$, then
  $\Rvm(k)>|H_1|+\cdots+|H_r|$.
\end{proposition}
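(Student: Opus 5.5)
The plan is to take the disjoint union $G=H_1\cup\cdots\cup H_r$ as a witness graph on $n=|H_1|+\cdots+|H_r|$ vertices and show that $E_k\not\vm G$. By Definition~\ref{def:Rvm}, this immediately gives $\Rvm(k)>n$.

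The first step is to compute $\beta(G)=\max_{G'\in[G]_{\LC}}\alpha(G')$. Lemma~\ref{lem:disconnected} is stated for the decomposition into connected components, whereas the $H_i$ need not be connected. So I will refine: write each $H_i$ as the disjoint union of its components. Applying Lemma~\ref{lem:disconnected} to $H_i$ gives $\beta(H_i)=\sum_{C}\beta(C)$, where $C$ ranges over the components of $H_i$. The components of $G$ are exactly the components of all the $H_i$ taken together, since there are no edges between different $H_i$. Applying the lemma again to $G$ and regrouping the sum by $i$ yields
\[
  \beta(G)=\sum_{i=1}^r\beta(H_i)=j_1+\cdots+j_r\le k-1 .
\]
Alternatively, one can rerun the lemma's short argument directly. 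Every local complementation at a vertex of $H_i$ only alters edges inside $V(H_i)$, so each orbit member has the form $H_1'\cup\cdots\cup H_r'$ with $H_i'\in[H_i]_{\LC}$. An independent set of such a graph then splits into independent sets of the $H_i'$.

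The second step is to conclude with Proposition~\ref{prop:alpha}. Every $G'\in[G]_{\LC}$ has $\alpha(G')\le\beta(G)\le k-1<k$, hence $E_k\not\vm G$. Since $|G|=n$, not every graph on $n$ vertices contains $E_k$ as a vertex-minor, and therefore $\Rvm(k)>n$.

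There is no real obstacle here. The only point needing care is the passage from components to the not-necessarily-connected blocks $H_i$ in the first step, which the refinement argument handles. One might also want to remark that only the upper bound $\beta(H_i)\le j_i$ is used, so the hypothesis could be weakened to inequalities.
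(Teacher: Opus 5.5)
Your proposal is correct and follows the paper's proof: form the disjoint union $G=H_1\cup\cdots\cup H_r$, use Lemma~\ref{lem:disconnected} to get $\beta(G)=\sum_i\beta(H_i)\le k-1$, and conclude $E_k\not\vm G$ via Proposition~\ref{prop:alpha}. Your refinement through connected components is a small improvement in rigor, since the paper applies Lemma~\ref{lem:disconnected} directly even though the blocks $H_i$ need not be connected.
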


\begin{proof}
  Set $G=H_1\cup\cdots\cup H_r$.  By
  Lemma~\ref{lem:disconnected},
  $\beta(G)=\sum_i\beta(H_i)\leq k-1<k$, so
  $E_k\not\vm G$ by Proposition~\ref{prop:alpha}.
\end{proof}

Three building blocks arise from our results:
$K_2$ with $\beta=1$;
$\overline{C_6}$ with $\beta=2$
(Proposition~\ref{prop:Rvm3}); and any $n=10$
counterexample~$G^*$ with $\beta=3$
(since $\Rvm(3)=7$ forces $\beta(G^*)\geq 3$, while Phase~3 for
$k=4$ gives $\beta(G^*)<4$).
Each block is optimal for its~$\beta$ value: $K_2$ is the largest
graph with $\beta=1$, $\overline{C_6}$ attains the maximum order~$6$
for $\beta=2$ (since $\Rvm(3)=7$), and the counterexamples attain the
maximum order~$10$ for $\beta=3$ (since $\Rvm(4)=11$).
Packing $\lfloor(k{-}1)/3\rfloor$ copies of~$G^*$ and filling
the remainder yields:

\begin{corollary}\label{cor:explicit}
  For every $k\geq 2$, writing $k-1=3q+r$ with $0\leq r\leq 2$,
  \[
    \Rvm(k) \;\geq\; 10q + v(r) + 1,
  \]
  where $v(0)=0$, $v(1)=2$, $v(2)=6$.
  In particular, $\Rvm(5)\geq 13$, $\Rvm(6)\geq 17$, and
  $\Rvm(7)\geq 21$.
\end{corollary}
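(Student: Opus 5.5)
The plan is to apply Proposition~\ref{prop:building} to one explicit disjoint union of building blocks. Write $k-1=3q+r$ with $0\leq r\leq 2$. Take $q$ vertex-disjoint copies of an $n=10$ counterexample $G^*$ (say $G_1$ from Theorem~\ref{thm:main}). If $r=1$, add one copy of $K_2$. If $r=2$, add one copy of $\overline{C_6}$. If $r=0$, add nothing.

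First we record the $\beta$-values of the blocks, where $\beta(H)=\max_{H'\in[H]_{\LC}}\alpha(H')$.
\begin{itemize}
  \item $\beta(K_2)=1$: its LC orbit is $\{K_2\}$, by the lower-bound argument in Proposition~\ref{prop:Rvm2}.
  \item $\beta(\overline{C_6})=2$: the BFS certificate in Proposition~\ref{prop:Rvm3} gives $\beta\leq 2$, and $\alpha(\overline{C_6})=2$ gives equality.
  \item $\beta(G^*)=3$: Phase~3 membership for $k=4$ gives $\beta\leq 3$, and $\Rvm(3)=7\leq 10$ gives $\beta\geq 3$.
\end{itemize}
Hence the $\beta$-values of the chosen blocks sum to $3q+j_r$, where $j_0=0$, $j_1=1$, $j_2=2$, so the sum equals $3q+r=k-1$.

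The total order of the union is $10q+v(r)$ with $v(0)=0$, $v(1)=2$, $v(2)=6$. Since the $\beta$-sum is at most $k-1$, Proposition~\ref{prop:building} gives $\Rvm(k)>10q+v(r)$, which is $\Rvm(k)\geq 10q+v(r)+1$. One degenerate case needs a check: $q=0$ and $r=0$ would mean $k=1$, which is excluded by $k\geq 2$. So the union is always nonempty and the bound makes sense.

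The particular values follow by substitution.
\begin{itemize}
  \item $k=5$: $k-1=4=3\cdot 1+1$, so the bound is $10+2+1=13$.
  \item $k=6$: $k-1=5=3\cdot 1+2$, so the bound is $10+6+1=17$.
  \item $k=7$: $k-1=6=3\cdot 2$, so the bound is $20+0+1=21$.
\end{itemize}

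No step is really an obstacle. The only point needing care is justifying $\beta(G^*)=3$ exactly. The statement only needs the upper bound $\beta(G^*)\leq 3$, but both directions are cheap and confirm that the blocks are as described.
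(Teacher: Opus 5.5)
Your proposal is correct and is essentially the paper's proof. You feed $q$ disjoint copies of an $n=10$ counterexample $G^*$, together with $K_2$ (if $r=1$) or $\overline{C_6}$ (if $r=2$), into Proposition~\ref{prop:building}, with total $\beta$ equal to $3q+r=k-1$ and total order $10q+v(r)$; your justification of the three $\beta$-values and the $k\geq 2$ nonemptiness check just spell out what the paper states in the paragraph preceding the corollary.
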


\begin{proof}
  Apply Proposition~\ref{prop:building} with $q$~copies of~$G^*$
  ($\beta=3$ each).  If $r=1$, adjoin~$K_2$ ($\beta=1$); if $r=2$,
  adjoin $\overline{C_6}$ ($\beta=2$).  The total $\beta$ is
  $3q+r=k-1<k$ and the total order is $10q+v(r)$.
\end{proof}

Table~\ref{tab:explicit} compares these explicit bounds with the
leading term of~\eqref{eq:lower-bound}.  For $k\leq 9$,
Corollary~\ref{cor:explicit} is strictly stronger; the asymptotic bound
overtakes near $k=10$.

\begin{table}[ht]
  \centering
  \caption{Explicit lower bounds from Corollary~\ref{cor:explicit}
    vs.\ the leading term of the asymptotic lower
    bound~\eqref{eq:lower-bound}.  Entries marked ``tight'' match the
    known value of~$\Rvm(k)$.}
  \label{tab:explicit}
  \begin{tabular}{cccc}
    \toprule
    $k$ & Cor.~\ref{cor:explicit}
        & $\lfloor k^2/(2\log_2 3)\rfloor$
        & status \\
    \midrule
    $2$  & $3$   & $1$  & tight \\
    $3$  & $7$   & $2$  & tight \\
    $4$  & $11$  & $5$  & tight \\
    $5$  & $13$  & $7$  & new \\
    $6$  & $17$  & $11$ & new \\
    $7$  & $21$  & $15$ & new \\
    $8$  & $23$  & $20$ & new \\
    $9$  & $27$  & $25$ & new \\
    \bottomrule
  \end{tabular}
\end{table}

\subsection{Toward $\Rvm(5)$}\label{subsec:toward-k5}

We applied the three-phase classification with $k=5$ and an
orbit-search budget of $50{,}000$ to all non-isomorphic graphs on
$n\leq 10$ vertices.  For $n\in\{8,9\}$, a verification run with
budget~$10^7$ produced identical Phase~3 counts, confirming that the
orbit of every Phase~3 graph was fully explored within the original
budget; the counts below are therefore exact at those orders.  At
$n=10$, some orbits may exceed the budget, so the Phase~3 count for
$n=10$ is an upper bound on the number of graphs with $E_5\not\vm G$.

\begin{center}
  \begin{tabular}{crrrr}
    \toprule
    $n$ & total & Phase~1 & Phase~2 & Phase~3 \\
    \midrule
    $8$  & $12{,}346$       & $930$       & $1{,}498$     & $9{,}918$ \\
    $9$  & $274{,}668$      & $29{,}228$  & $61{,}564$    & $183{,}876$ \\
    $10$ & $12{,}005{,}168$ & $1{,}841{,}049$ & $4{,}699{,}331$ & $5{,}464{,}788$ \\
    \bottomrule
  \end{tabular}
\end{center}

For $k=4$ at $n=10$, only six graphs reach Phase~3; for $k=5$, more
than five million do.  This indicates that $\Rvm(5)$ is substantially
larger than~$11$, consistent with Conjecture~\ref{conj:C}.

\section{Conclusion and open problems}\label{sec:conclusion}

We determined $\Rvm(4)=11$ and classified the six extremal graphs on
$10$ vertices.  Up to isomorphism, they represent five
LC-equivalence classes, and each labeled LC orbit has size~$8{,}712$.
These extremal graphs serve as building blocks
(Corollary~\ref{cor:explicit}), yielding the first explicit lower
bounds on~$\Rvm(k)$ that surpass the leading term of the asymptotic
bound for all $k\leq 9$.

The next unresolved case is $\Rvm(5)$.  Our partial survey for $k=5$
(Section~\ref{subsec:toward-k5}) shows that millions of graphs on
$10$~vertices still avoid~$E_5$ as a vertex-minor, so $\Rvm(5)$ lies
well beyond the current computational frontier.  A direct extension to
$n=15$ would require classifying more than $3\times 10^{19}$
non-isomorphic graphs, so further progress will likely require
structural arguments rather than exhaustive enumeration.

A natural question is whether there is a structural characterization of
graphs that exclude $E_k$ as a vertex-minor.  The six counterexamples at
$n=10$ all have rank-width~$2$, clique number~$4$, and the same labeled
orbit size, which points to a more rigid structure than current theory
predicts.

Finally, since the vertex-minor relation is a well-quasi-order on
graphs of bounded rank-width~\cite{Oum2008}, one may ask how these
extremal examples sit inside that order and what they reveal about
vertex-minor ideals.

\section*{Acknowledgments}

Computations were performed using the \texttt{nauty} graph generation
tools~\cite{McKayPiperno2014}.  We thank the developers of
\texttt{nauty} and \texttt{networkx}.

\medskip
\noindent\textbf{Data availability.}
The source code, \texttt{graph6} counterexample files, and
cross-validation logs supporting this study are available from the
corresponding author upon reasonable request.

\providecommand{\bysame}{\leavevmode\hbox to3em{\hrulefill}\thinspace}
\providecommand{\MR}{\relax\ifhmode\unskip\space\fi MR }
\providecommand{\MRhref}[2]{%
  \href{http://www.ams.org/mathscinet-getitem?mr=#1}{#2}
}
\providecommand{\href}[2]{#2}

\end{document}